\newtheorem{thm}{Theorem}
\newtheorem{lem}[thm]{Lemma}
\begin{document}

\title{\bf A Conditional Explicit Result \\ for the Prime Number Theorem \\ in Short Intervals}

\author{\textsc{Michaela Cully-Hugill} \\ School of Science, UNSW Canberra \\ Australia ACT 2612 \\   \ \\\textsc{Adrian W. Dudek} \\ Wacal Road, Mothar Mountain \\ Australia QLD 4570}

\maketitle

\begin{abstract}
This paper gives an explicit bound for the prime number theorem in short intervals under the assumption of the Riemann hypothesis.
\end{abstract}

\section{Introduction}
The von Mangoldt function is defined as
\begin{displaymath}
   \Lambda(n) = \left\{
     \begin{array}{ll}
       \log p  & : \hspace{0.1in} n=p^m, \text{ $p$ is prime, $m \in \mathbb{N}$}\\
       0   & : \hspace{0.1in} \text{otherwise,}
     \end{array}
   \right.
\end{displaymath} 
and we will consider the sum $\psi(x) = \sum_{n \leq x} \Lambda(n)$. The prime number theorem (PNT) is the statement $\psi(x) \sim x$ as $x \rightarrow \infty$. For the PNT in short intervals, it is known that 
\begin{equation}\label{PNT-intervals}
\psi(x+h) - \psi(h) \sim h
\end{equation}
provided that $h$ grows suitably with respect to $x$. Heath-Brown \cite{heathbrown} has shown that one can take $h = x^{\frac{7}{12} - \epsilon}$ provided that $\epsilon \rightarrow 0$ as $x \rightarrow \infty$. Assuming the Riemann hypothesis (RH), Selberg \cite{selberg} showed that (\ref{PNT-intervals}) is true for any $h=h(x)$ such that $h/(x^{1/2} \log x) \rightarrow \infty$ as $x \rightarrow \infty$. On the other hand, Maier \cite{maier} has shown that the statement is false for $h = (\log x)^{\lambda} $ for any $\lambda > 1$.

In this paper we prove the following explicit version of Selberg's result.
\begin{thm} \label{main}
Assuming RH, for any $h$ satisfying $\sqrt{x}\log x\leq h \leq x^\frac{3}{4}$ and all $x \geq e^{10}$ we have
\begin{equation}
|\psi(x+h) - \psi(x)-h|< \frac{1}{\pi} \sqrt{x} \log x \log\bigg( \frac{h}{\sqrt{x}\log x} \bigg) + 2\sqrt{x} \log x.
\end{equation} 
\end{thm}

Selberg's result follows from Theorem 1 for any $h = f(x)\sqrt{x}\log x$ with unbounded $f(x)=o(x)$, in that we would have
$$|\psi(x+h) - \psi(x)-h| \ll \sqrt{x} \log x \log\left( f(x) \right) = o(h).$$
For $h = c \sqrt{x} \log x$, Theorem 1 implies Cram{\'e}r's \cite{cramerorder} result on primes in the interval $(x, x + h)$ for all sufficiently large $x$ and $c$. In an earlier paper \cite{dudekcramer}, the author showed that $c = 1 + \epsilon$ is suitable for any $\epsilon > 0$ and for all sufficiently large $x$. Carneiro, Milinovich and Soundararajan \cite{carneiro} have since shown that we can take $c=22/55$ for all $x\geq 4$. The same methods used in \cite{dudekcramer} are applied to reach Theorem 1. As such, it could be possible to sharpen Theorem \ref{main} using the techniques in \cite{carneiro}.

The closest result to Theorem \ref{main} is the following from Schoenfeld \cite{schoenfeldjust}.

\begin{thm}
Assuming RH, for $x \geq 73.2$ we have
\begin{equation} \label{schoenfeld}
|\psi(x) - x|< \frac{1}{8 \pi} \sqrt{x} \log^2 x.
\end{equation} 
\end{thm}

Schoenfeld's result confirms Selberg's theorem for the slightly stronger condition of $h/(\sqrt{x} \log^2 x) \rightarrow \infty$. One also has from the above
$$|\psi(x+h) - \psi(x)-h | < \frac{1}{4 \pi} \sqrt{x+h} \log^2 (x+h).$$
When $x$ is sufficiently large, Theorem \ref{main} improves the leading constant in this bound for any choice of $h\leq x^{0.735}$.

\section{Proof of Theorem \ref{main}}

\subsection{A smooth explicit formula}

The Riemann--von Mangoldt explicit formula relates $\psi(x)$ to the zeros of the Riemann zeta-function $\zeta(s)$ (e.g. see Ingham \cite{inghambook}). Tor all non-integer $x>0$, 
\begin{equation} \label{explicitoriginal}
\psi(x) = x - \sum_{\rho} \frac{x^\rho}{\rho}-\log 2\pi - \frac{1}{2} \log(1-x^{-2}),
\end{equation}
where the sum is over all non-trivial zeroes $\rho = \beta+i\gamma$ of $\zeta(s)$. We define the weighted sum
\begin{equation}\label{weighted}
\psi_1 (x) = \sum_{n \leq x} (x-n) \Lambda(n) = \int_2^x \psi(t) dt
\end{equation}
and use the following explicit formula, proved in \cite{dudekcramer} (see also Thm. 28 of \cite{inghambook}).
 
\begin{lem} \label{first}
For non-integer $x>0$ we have

\begin{equation} \label{explicit}
\psi_1(x) = \frac{x^2}{2} - \sum_{\rho} \frac{x^{\rho+1}}{\rho (\rho +1)} - x \log(2\pi) + \epsilon(x)
\end{equation}
where
$$1.545 < \epsilon(x) < 2.069.$$
\end{lem}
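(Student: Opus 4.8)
The plan is to derive \eqref{explicit} by integrating the classical explicit formula \eqref{explicitoriginal} across the interval $[2,x]$, exploiting the representation $\psi_1(x)=\int_2^x \psi(t)\,dt$ recorded above. The polynomial and constant pieces integrate at once, via $\int_2^x t\,dt = \tfrac{x^2}{2}-2$ and $\int_2^x \log(2\pi)\,dt = x\log(2\pi)-2\log(2\pi)$, while integrating the sum over the nontrivial zeros termwise yields
\[
\int_2^x \sum_{\rho} \frac{t^{\rho}}{\rho}\,dt = \sum_{\rho} \frac{x^{\rho+1}}{\rho(\rho+1)} - \sum_{\rho} \frac{2^{\rho+1}}{\rho(\rho+1)}.
\]
The leading terms $\tfrac{x^2}{2}$, $-\sum_{\rho} x^{\rho+1}/(\rho(\rho+1))$ and $-x\log(2\pi)$ are then exactly those appearing in \eqref{explicit}, and the remaining $x$-independent constants together with the logarithmic integral are packaged into
\[
\epsilon(x) = -2 + 2\log(2\pi) + \sum_{\rho} \frac{2^{\rho+1}}{\rho(\rho+1)} - \frac{1}{2}\int_2^x \log(1-t^{-2})\,dt.
\]

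The first thing I would establish is the legitimacy of the termwise integration, which is the main obstacle, since $\sum_{\rho} t^{\rho}/\rho$ converges only conditionally under the symmetric pairing of $\rho$ with $\bar\rho$. I would work instead with the truncated form of \eqref{explicitoriginal} restricted to $|\gamma|\le T$, integrate the uniformly controlled remainder over $[2,x]$, and let $T\to\infty$; the fact that the pointwise formula may fail at the integers is harmless, as this set has measure zero. The key structural gain is that, after integration, the series $\sum_{\rho} x^{\rho+1}/(\rho(\rho+1))$ converges absolutely, because $|\rho(\rho+1)|\gg \gamma^2$ and $\sum_{\rho}\gamma^{-2}<\infty$; this both legitimises the interchange and guarantees that the constant $\sum_{\rho} 2^{\rho+1}/(\rho(\rho+1))$ is a well-defined real number, the reality following from the conjugate symmetry of the zeros.

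It then remains to bound the four contributions to $\epsilon(x)$. The explicit constants give $-2+2\log(2\pi)\approx 1.68$. For the integral, one has $-\log(1-t^{-2})>0$ with $-\log(1-t^{-2})\le (t^2-1)^{-1}$, so that $-\tfrac12\int_2^x \log(1-t^{-2})\,dt$ is positive and bounded above by $\tfrac14\log 3 < 0.28$ uniformly in $x$. For the constant series I would use $|2^{\rho+1}| = 2^{\beta+1}\le 4$ together with $|\rho|,|\rho+1|\ge |\gamma|$ to obtain
\[
\left| \sum_{\rho} \frac{2^{\rho+1}}{\rho(\rho+1)} \right| \le 4\sum_{\rho} \frac{1}{\gamma^2} < 0.2,
\]
the numerical value of $\sum_{\rho}\gamma^{-2}$ being well known. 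Adding the three upper bounds gives $\epsilon(x) < 1.68 + 0.28 + 0.2 < \tfrac{12}{5}$, with a comfortable margin. Thus, once the interchange of summation and integration is justified, the stated bound follows from elementary estimates.
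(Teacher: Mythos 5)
Your proposal is correct, and it is worth noting that the paper itself gives no proof of this lemma at all --- it simply defers to \cite{dudekcramer}. The derivation there is the standard contour-integration one: apply Perron's formula to $\psi_1$ directly with the kernel $x^{s+1}/(s(s+1))$ and collect residues, so that the constant in $\epsilon(x)$ appears as $\zeta'(-1)/\zeta(-1) \approx 1.985$ together with the tail of trivial zeros $-\sum_{r\ge1} x^{1-2r}/(2r(2r-1))$. Your route --- integrating the $\psi$-formula (\ref{explicitoriginal}) over $[2,x]$ --- is a legitimate alternative that trades the residue computation at $s=-1$ for the constants $-2+2\log(2\pi)+\sum_\rho 2^{\rho+1}/(\rho(\rho+1))$, and your three numerical bounds ($1.68$, $\tfrac14\log 3<0.28$, and $4\sum_\rho\gamma^{-2}<0.2$) are each correct, landing at about $2.14<\tfrac{12}{5}$, consistent with the $\approx 1.99$ one gets from the residue version. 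You also correctly identify the only real technical point, the interchange of $\sum_\rho$ and $\int_2^x$, and your fix (integrate the $T$-truncated formula and let $T\to\infty$, using the absolute convergence of $\sum_\rho |\rho(\rho+1)|^{-1}$ on the integrated side) is the standard one. Two minor caveats: the lemma is stated for all $x>0$, whereas your argument as written requires $x\ge 2$ (formula (\ref{explicitoriginal}) itself only holds for $x>1$, so this is really a defect of the statement, and only large $x$ is ever used); and since the paper later uses $|\epsilon(\Delta)|<\tfrac{48}{5\Delta}$, you should observe that your decomposition also gives the two-sided bound, which it does since the dominant term $-2+2\log(2\pi)$ is positive and the only possibly negative contribution exceeds $-0.2$.
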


\noindent The bound on $\epsilon(x)$ has been reduced from \cite{dudekcramer}, as we can write
\begin{align*}
\epsilon(x) &= 2\log 2\pi - 2  +  \sum_\rho \frac{2^{\rho+1}}{\rho(\rho+1)} - \frac{1}{2} \int_2^x \log(1-t^{-2}) dt \\
&< 2\log 2\pi - 2 + 2^\frac{3}{2}(\gamma + 2 - \log 4\pi) + \log \frac{3\sqrt{3}}{4} < 2.069
\end{align*}
and $$\epsilon(x) > 2\log 2\pi - 2 - 2^\frac{3}{2}(\gamma + 2 - \log 4\pi) > 1.545.$$

Using a linear combination of equation (\ref{weighted}), we can examine the distribution of prime powers in the interval $(x, x+h)$. For $2\leq \Delta < \sqrt{x}\log x \leq h \leq x$, let

\begin{displaymath}
   w(n) = \left\{
     \begin{array}{ll}
     (n-x+\Delta)/\Delta & : \hspace{0.1in}  x-\Delta \leq n \leq x\\
       1 & : \hspace{0.1in}  x \leq n \leq x+h\\
      (x+h+\Delta-n)/\Delta & : \hspace{0.1in}  x+h \leq n \leq x+h+\Delta\\
       0   & : \hspace{0.1in} \text{otherwise.}
     \end{array}
   \right.
\end{displaymath} 
This leads to the identity

\begin{eqnarray*} \label{weighted-identity}
\sum_{n} \Lambda(n) w(n) & = & \frac{1}{\Delta} (\psi_1(x+h+\Delta) - \psi_1(x+h) - \psi_1 (x) + \psi_1(x-\Delta)),
\end{eqnarray*}
which can be verified by expanding both sides. Notice that over $x \leq n \leq x+h$, the sum on the LHS is equal to $\psi(x+h)-\psi(x)$. We thus aim to estimate this expression by bounding the RHS of (\ref{weighted-identity}). Using Lemma \ref{first} in the above equation gives the following.

\begin{lem} \label{dog}
Let $2 \leq \Delta < h \leq x$ with $x \notin \mathbb{Z}$. Then

$$\sum_{n} \Lambda(n) w(n) = h + \Delta - \frac{1}{\Delta} \sum_{\rho} S(\rho) + \epsilon(\Delta)$$
where 

$$S(\rho) =\frac{ (x+h+\Delta)^{\rho+1} - (x+h)^{\rho+1} - x^{\rho+1} +(x-\Delta)^{\rho+1}}{\rho(\rho+1)}$$
and 

$$|\epsilon(\Delta)| < \frac{21}{20\Delta}.$$

\end{lem}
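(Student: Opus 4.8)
The plan is to prove this by substituting the explicit formula of Lemma~\ref{first} into the trapezoidal identity and then sorting the resulting terms into four families: a polynomial main term, the contribution of the zeros, the $\log 2\pi$ term, and the error term. Before doing so I would pin down the exact identity being used. Rather than expand both sides combinatorially, I would obtain it cleanly by writing the left-hand side as a Riemann--Stieltjes integral $\int w(t)\,d\psi(t)$ and integrating by parts. Since $w$ is continuous and compactly supported the boundary terms vanish, leaving $-\int \psi(t)\,w'(t)\,dt$; as $w'$ equals $\tfrac{1}{\Delta}$ on $(x-\Delta,x)$, $0$ on $(x,x+h)$, and $-\tfrac{1}{\Delta}$ on $(x+h,x+h+\Delta)$, and $\int_a^b \psi(t)\,dt = \psi_1(b)-\psi_1(a)$, this collapses to
$$\sum_n \Lambda(n) w(n) = \frac{1}{\Delta}\big(\psi_1(x+h+\Delta) - \psi_1(x+h) - \psi_1(x) + \psi_1(x-\Delta)\big),$$
which fixes the four evaluation points and the signs $+,-,-,+$ that reappear in $S(\rho)$.

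With the identity in hand, I would insert Lemma~\ref{first} at each of the four arguments; since $\psi_1$ and the right-hand side of \eqref{explicit} are continuous in the argument, the formula is valid at all four points even though the hypothesis only guarantees $x \notin \mathbb{Z}$. The computation then splits along the structure of \eqref{explicit}. For the quadratic piece I would expand $\tfrac{1}{2}\big((x+h+\Delta)^2 - (x+h)^2 - x^2 + (x-\Delta)^2\big)$; the $x^2$, $h^2$, and $2xh$ contributions cancel and the surviving mixed terms collapse to $\Delta^2 + h\Delta$, so after dividing by $\Delta$ this family contributes exactly $h+\Delta$, the claimed main term. For the $\log 2\pi$ piece the same four-point combination of the linear function $y\mapsto y$ gives $(x+h+\Delta) - (x+h) - x + (x-\Delta) = 0$, so this term vanishes entirely and never appears in the statement.

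The zeros and the error are the remaining two families. Each individual sum $\sum_\rho y^{\rho+1}/(\rho(\rho+1))$ converges absolutely---with $\beta\le 1$ one has $|y^{\rho+1}|\le y^2$ while $|\rho(\rho+1)|$ grows like $|\gamma|^2$ and $\sum_\rho |\gamma|^{-2}$ converges by the standard zero-counting estimate---so I may recombine the four sums term by term into a single sum, producing precisely $-\tfrac{1}{\Delta}\sum_\rho S(\rho)$ with $S(\rho)$ as defined. Finally, the four error contributions $\epsilon(x+h+\Delta)$, $-\epsilon(x+h)$, $-\epsilon(x)$, $\epsilon(x-\Delta)$, each bounded by $12/5$ in absolute value by Lemma~\ref{first}, combine into a single error $\epsilon(\Delta)$ satisfying $|\epsilon(\Delta)| < \tfrac{1}{\Delta}\cdot 4\cdot \tfrac{12}{5} = \tfrac{48}{5\Delta}$.

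I expect no deep obstacle here: this is a bookkeeping lemma whose real content is the cancellation that turns the quadratic term into $h+\Delta$ and annihilates the $\log 2\pi$ term. The only points demanding a moment's care are the sign and coefficient attached to each of the four evaluations---most safely nailed down by the integration-by-parts derivation rather than by trusting a hand expansion of both sides---and the justification, via absolute convergence of the zero sums, that the term-by-term recombination into $S(\rho)$ is legitimate.
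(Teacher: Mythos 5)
Your proof is correct and takes essentially the same route as the paper: verify the four-point linear combination identity for $\psi_1$, substitute Lemma \ref{first} at each of the four arguments, and track the quadratic, $\log 2\pi$, zero-sum, and error families (your integration-by-parts derivation of the identity is a cleaner substitute for the paper's ``expand both sides''). One remark worth making: your identity
$$\sum_n \Lambda(n) w(n) = \frac{1}{\Delta}\big(\psi_1(x+h+\Delta) - \psi_1(x+h) - \psi_1(x) + \psi_1(x-\Delta)\big)$$
with signs $+,-,-,+$ is the correct one (it is the only sign pattern consistent with the definition of $S(\rho)$ and with the main term $h+\Delta$); the version displayed in the paper, with $-2\psi_1(x+h)$ and $-\psi_1(x-\Delta)$, is a typographical error.
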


It remains to estimate the sum over zeros. We will split it into three sums,
\begin{equation}\label{three}
\sum_{\rho} S(\rho) = \bigg( \sum_{| \gamma | \leq \alpha x/h} +\sum_{\alpha x/h <| \gamma | < \beta x/\Delta} + \sum_{  | \gamma | \geq \beta x/\Delta }  \bigg)  S(\rho)
\end{equation}
where $\alpha > 0$ and $\beta>0$ are parameters we can later optimise over.

\begin{lem} \label{highbound}
Let $2 \leq \Delta < h \leq x$ and assume RH. We have
$$\left| \sum_{| \gamma | \geq \beta x/\Delta} S(\rho) \, \right| < \frac{4 \Delta (x+h + \Delta)^{3/2} }{\pi \beta x}\log(\beta x/\Delta) $$
provided that $\beta x / \Delta \geq \gamma_1 = 14.13\ldots$, the ordinate of the first zero of $\zeta(s)$.
\end{lem}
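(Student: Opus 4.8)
The plan is to bound $S(\rho)$ termwise using the Riemann hypothesis and then to control the resulting tail over high zeros by a convergent sum of the shape $\sum \gamma^{-2}$. Since the nontrivial zeros occur in conjugate pairs, $S(\rho) + S(\overline{\rho})$ is real, so $\sum_{|\gamma| > \beta x/\Delta} S(\rho)$ is real; it therefore suffices to bound its absolute value by the stated quantity.

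First I would estimate a single term. Writing $\rho = \tfrac{1}{2} + i\gamma$ under the Riemann hypothesis, each of the four powers in the numerator of $S(\rho)$ has modulus $y^{3/2}$ with $0 < y \leq x+h+\Delta$, so the numerator is at most $4(x+h+\Delta)^{3/2}$ in absolute value. For the denominator, $|\rho| = \sqrt{\tfrac14 + \gamma^2} \geq |\gamma|$ and $|\rho+1| = \sqrt{\tfrac94 + \gamma^2} \geq |\gamma|$, so $|\rho(\rho+1)| \geq \gamma^2$. Hence
$$|S(\rho)| \leq \frac{4(x+h+\Delta)^{3/2}}{\gamma^2}.$$
It is worth noting that this crude bound on the numerator, with no appeal to cancellation, is exactly what is wanted here; the oscillation of $t^{i\gamma}$ would instead be exploited in the intermediate range $\alpha x/h < |\gamma| \leq \beta x/\Delta$.

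Summing over the high zeros, the estimate reduces to the purely zero-theoretic inequality
$$\sum_{|\gamma| > T} \frac{1}{\gamma^2} \leq \frac{\log T}{\pi T}, \qquad T = \frac{\beta x}{\Delta},$$
after which the claim follows on multiplying through by $4(x+h+\Delta)^{3/2}$ and recalling that $\Delta/(\beta x) = 1/T$. To prove this inequality I would apply Abel summation to $\int_T^\infty t^{-2}\, dN(t)$, where $N(t)$ counts zeros with ordinate in $(0,t]$, and insert an explicit form of the Riemann--von Mangoldt formula $N(t) = \tfrac{t}{2\pi}\log\tfrac{t}{2\pi} - \tfrac{t}{2\pi} + \tfrac78 + S(t)$. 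A direct computation shows that the main term contributes $\tfrac{1}{\pi T}\big(\log(T/2\pi) + 1\big)$ to the two-sided sum, while the boundary term $-N(T)/T^2$ is negative and so only helps. The crucial observation is that $\log(T/2\pi) + 1 = \log T - (\log 2\pi - 1)$ with $\log 2\pi - 1 = 0.838\ldots > 0$, so the main term already lies strictly below the target $\tfrac{\log T}{\pi T}$.

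The main obstacle is this final, explicit step: one must verify that the favourable slack $\log 2\pi - 1$ genuinely dominates the net contribution of the error term $S(t)$, uniformly for all $T \geq \gamma_1$, so that the clean constant $1/\pi$ survives all the way down to the first zero. This is precisely where the hypothesis $\beta x/\Delta \geq \gamma_1 = 14.13\ldots$ enters. The verification rests on a sufficiently sharp explicit bound for $S(t)$ of Rosser--Trudgian type; with the margin above in hand, closing the inequality is then a routine, if delicate, numerical computation.
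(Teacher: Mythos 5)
Your argument is correct and essentially identical to the paper's: the same termwise bound $|S(\rho)| \leq 4(x+h+\Delta)^{3/2}/\gamma^2$ under the Riemann hypothesis, followed by the same tail estimate $\sum_{\gamma \geq T} \gamma^{-2} < \frac{\log T}{2\pi T}$ for $T \geq \gamma_1$, doubled to account for negative ordinates. The only difference is that the paper simply cites this tail estimate (Skewes), whereas you sketch its derivation from the Riemann--von Mangoldt formula and correctly flag the remaining explicit verification; that verification is precisely the content of the cited result, so nothing essential is missing.
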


\begin{proof}
On RH, one has
$$|S(\rho)| \leq \frac{4 (x+h+\Delta)^{3/2}}{\gamma^2}.$$
The result follows from Lemma 1(ii) of Skewes \cite{skewes}, that for all $T \geq \gamma_1$,
$$\sum_{\gamma \geq T} \frac{1}{\gamma^2} < \frac{1}{2\pi} \frac{\log T}{T}.$$

\end{proof}

The following lemmas require estimates on the zero-counting function $N(T)$, which counts the number of zeros of $\zeta(s)$ in the critical strip $0<\beta<1$ with $0<\gamma\leq T$. Backlund \cite{Backlund_18} showed that $N(T) = P(T) + Q(T)$, where
\begin{equation*}
    P(T) := \frac{T}{2\pi}\log{\frac{T}{2\pi}} - \frac{T}{2\pi} + \frac{7}{8}
\end{equation*}
and $Q(T) = O(\log T)$. Hasanalizade, Shen, and Wong \cite[Cor.~1.2]{H_S_W_22} have given the most recent explicit version of this, of
\begin{align}\label{R(T)}
|Q(T)| \leq R(T) = a_1\log{T} + a_2\log\log{T} + a_3
\end{align} 
with $a_1= 0.1038$, $a_2=0.2573$, and $a_3 = 9.3675$, for all $T \geq e$.

\begin{lem} \label{lowbound}
Let $2 \leq \Delta < h \leq x$ and assume RH. We have
$$\left| \sum_{| \gamma | \leq \alpha x/h} S(\rho) \, \right| < \frac{\alpha x (h+\Delta) \Delta}{ \pi h \sqrt{x-\Delta}}  \log(\alpha x/h).$$
\end{lem}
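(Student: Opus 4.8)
The plan is to exploit the second-difference structure of $S(\rho)$ to rewrite it as an iterated integral, bound that integral pointwise on the Riemann hypothesis, and then sum the pointwise bound against the density of the zeros. First I would group the four terms in the numerator into two differences, each spanning an interval of length $\Delta$, and express each difference as an integral of $(\rho+1)t^\rho$. After cancelling the factor $\rho+1$ against the denominator, the change of variables $t \mapsto t-h-\Delta$ aligns the two integrals over the common interval $[x-\Delta,x]$, giving
$$S(\rho) = \frac{1}{\rho}\int_{x-\Delta}^{x}\big[(s+h+\Delta)^{\rho} - s^{\rho}\big]\,ds = \int_{x-\Delta}^{x}\int_{s}^{s+h+\Delta} v^{\rho-1}\,dv\,ds,$$
where the last equality again applies the fundamental theorem of calculus to remove the remaining factor of $\rho$.

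On the Riemann hypothesis one has $|v^{\rho-1}| = v^{-1/2}$, and since $v \geq s \geq x-\Delta$ throughout the region of integration, the integrand is bounded by $(x-\Delta)^{-1/2}$. The inner integral runs over an interval of length $h+\Delta$ and the outer over an interval of length $\Delta$, so I would obtain the clean pointwise estimate
$$|S(\rho)| \leq \frac{\Delta(h+\Delta)}{\sqrt{x-\Delta}},$$
uniformly in $\gamma$. The point is that this bound is completely insensitive to the size of $\gamma$, which is exactly why it is the right tool for the low-frequency range, in contrast to the $\gamma^{-2}$ bound driving Lemma \ref{highbound}.

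It then remains only to count the zeros. Summing the pointwise estimate yields
$$\sum_{|\gamma| \leq \alpha x/h} |S(\rho)| \leq \frac{\Delta(h+\Delta)}{\sqrt{x-\Delta}}\sum_{|\gamma| \leq \alpha x/h} 1,$$
so the result follows once I establish $\sum_{|\gamma| \leq T} 1 < \frac{T}{\pi}\log T$ at $T = \alpha x/h$. I would deduce this from a standard explicit bound on the zero-counting function $N(T)$, doubled to account for both signs of $\gamma$: writing the main term as $\frac{T}{2\pi}\log\frac{T}{2\pi e}$, the quantity $\frac{T}{2\pi}\log(2\pi e)$ that one gains by replacing $\log\frac{T}{2\pi e}$ with $\log T$ comfortably dominates the lower-order corrections for every admissible $T$, so that $N(T) < \frac{T}{2\pi}\log T$. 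If $\alpha x/h < \gamma_1 = 14.13\ldots$ the sum is empty and the inequality is trivial; otherwise, substituting $T = \alpha x/h$ reproduces the stated bound exactly.

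The main obstacle is this final step: one must confirm that the explicit counting bound is clean enough to collapse into the single term $\frac{T}{\pi}\log T$ across the whole range $T \geq \gamma_1$, without leaving an additive constant that would corrupt the stated form. Everything preceding it is a careful but routine manipulation of the integral representation of $S(\rho)$.
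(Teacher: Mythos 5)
Your proof is correct and takes essentially the same approach as the paper: the identical double-integral representation of $S(\rho)$ (the paper writes it as $\int_{x+h}^{x+h+\Delta}\int_{u-h-\Delta}^{u} t^{\rho-1}\,dt\,du$, which is your integral after the change of variables $u = s+h+\Delta$), the same trivial pointwise bound $|S(\rho)| \leq \Delta(h+\Delta)/\sqrt{x-\Delta}$, and the same count of zeros via $N(T) < \frac{T\log T}{2\pi}$ doubled for both signs of $\gamma$. The only difference is that you sketch a justification of the zero-counting bound where the paper simply cites it as known.
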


\begin{proof}
We can write
$$S(\rho) = \int_{x+h}^{x+h+\Delta} \int_{u-h-\Delta}^u t^{\rho-1} dt du,$$
so, under RH, one has
$$|S(\rho)| < \frac{ (h+\Delta) \Delta}{  \sqrt{x-\Delta}}. $$
With (\ref{R(T)}), we can use 
$$N(T) < \frac{T \log T}{2 \pi},$$
from which the result immediately follows.
\end{proof}

For the middle sum of (\ref{three}), we will use the following lemma. It follows directly from Lemma 3 of \cite{B_P_T_21}, in whose notation we use $\phi(\gamma) = \gamma^{-1}$, and takes constants $A_0$ and $A_1$ from Trudgian \cite[Thm.~2.2]{Trudgian_11} and $A_2$ from \cite[Lem.~2]{B_P_T_21}.

\begin{lem}\label{reciprocalzerobound}
For $2\pi \leq T_1 \leq T_2$ we have
\begin{align}
\sum_{T_1 < \gamma < T_2} \frac{1}{\gamma} &= \frac{1}{4 \pi}\log \frac{T_2}{T_1} \log \frac{T_2 T_1}{4\pi^2}  + \frac{Q(T_2)}{T_2} - \frac{Q(T_1)}{T_1} + E(T_1),
\end{align}
where $|Q(T)|\leq R(T)$, defined in (\ref{R(T)}), and
\begin{align*}
|E(T)| \leq \frac{2A_1\log T + 2A_0 + A_1 + A_2}{T^2}
\end{align*}
with $A_0 = 2.067$, $A_1 = 0.059$, $A_2 = 1/150$.
\end{lem}

\begin{lem} \label{middlebound}
Let $2 \leq \Delta < h \leq x$ and assume RH. For $\alpha x/h\geq 15$ we have
$$\left| \sum_{\alpha x/h <| \gamma | < \beta x/\Delta} S(\rho) \, \right|  < \Delta (x+h+\Delta)^{1/2}  \bigg( \frac{1}{\pi}\log \bigg( \frac{\beta h}{\alpha \Delta} \bigg) \log\bigg( \frac{\alpha \beta x^2}{4\pi^2 h \Delta} \bigg) + 5.4\bigg).$$
\end{lem}

\begin{proof}
We can write
$$S(\rho) = \frac{1}{\rho} \bigg( \int_{x+h}^{x+h+\Delta} t^{\rho} dt - \int_{x-\Delta}^x t^{\rho} dt \bigg),$$
and so bounding trivially gives
$$| S(\rho)| \leq \frac{2 (x+h+\Delta)^{1/2} \Delta}{| \gamma |}.$$
It follows that
$$\left| \sum_{\alpha x/h <| \gamma | < \beta x/\Delta} S(\rho) \, \right| \leq 4 (x+h+\Delta)^{1/2} \Delta \sum_{\alpha x/h < \gamma < \beta x /\Delta} \frac{1}{\gamma},$$
on which we apply Lemma \ref{reciprocalzerobound}, and bound the smaller order terms with the assumption of $T_1\geq 15$ to obtain the result. Note that the bound on $T_1$ is to reduce the constant $5.4$, but not restrict $\alpha$ too much.
\end{proof}

\subsection{Bounding the PNT in intervals}

From Lemma \ref{dog} we can write
\begin{align*}
\bigg| \psi(x+h) - \psi(x) - h  \bigg| < \ & \frac{1}{\Delta} \Bigg| \sum_{\rho} S(\rho) \Bigg|  + \Delta  + \frac{21}{20 \Delta} + \sum_{x-\Delta < n \leq x} w(n) \Lambda(n)  \\
& + \sum_{x+h < n \leq x+h+\Delta} w(n) \Lambda(n)
\end{align*}
As the smooth weight has $|w(n)| \leq 1$, the above bound is no greater than
\begin{align}\label{together}
\frac{1}{\Delta} \Bigg| \sum_{\rho} S(\rho) \Bigg|  + \Delta  + \frac{21}{20 \Delta} + 2\sum_{\substack{x+h < p^k \leq x+h+\Delta \\ k\geq 1}} \log p.
\end{align}
The largest term in this bound comes from the sum over $\rho$, in particular, the section estimated in Lemma \ref{middlebound}. Larger $\Delta$ results in a smaller main-term constant, so we will set $\Delta = C\sqrt{x}\log x$ and later choose an optimal value of $C\in(0,1)$. The reason for not taking larger $\Delta$ is two-fold: to keep $\Delta < h$ and ensure the smaller terms in (\ref{together}) are $O(\sqrt{x}\log x)$.

To bound the sum over prime powers we can use Montgomery and Vaughan's version of the Brun--Titchmarsh theorem for primes in intervals \cite[Eq.~1.12]{montgomeryvaughan}. Defining $\theta(x) = \sum_{p\leq x}\log p$, equation (1.12) of \cite{montgomeryvaughan} implies
\begin{align*}
\theta(x+h) - \theta(x) = \sum_{x<p\leq x+h} \log p \leq  \frac{2h\log (x+h)}{\log h}.
\end{align*}
The contribution from higher prime powers is relatively small, and can be bounded with explicit estimates on the difference between the Chebyshev functions $\psi(x)$ and $\theta(x)$. Costa Pereira \cite[Thm.~2,4,5]{Costa} gives lower bounds for different ranges of $x$. These can be combined into
\begin{equation}\label{Costa_diff}
    \psi(x)-\theta(x) > 0.999x^\frac{1}{2} + \frac{2}{3} x^\frac{1}{3}
\end{equation}
for all $x\geq 2187$. Broadbent \textit{et al.} \cite[Cor.~5.1]{Broadbent} give
\begin{equation}\label{Broadbent_diff}
    \psi(x) - \theta(x) < \alpha_1 x^\frac{1}{2} + \alpha_2 x^\frac{1}{3}
\end{equation}
with $\alpha_1= 1+ 1.93378 \cdot 10^{-8}$ and $\alpha_2 = 2.69$ for all $x\geq e^{10}$. Thus, we have
\begin{align*}
\psi(x+h+\Delta) - \psi(x+h) &\leq  \theta(x+h+\Delta) - \theta(x+h) + E_1(x) \\
&\leq  \frac{2\Delta\log (x+h+\Delta)}{\log \Delta} + E_1(x)
\end{align*}
where $E_1(x) = \alpha_1 (x+h+\Delta)^\frac{1}{2} + \alpha_2 (x+h+\Delta)^\frac{1}{3} - 0.999(x+h)^\frac{1}{2} - \frac{2}{3} (x+h)^\frac{1}{3}$, and is bounded by $E_1(x) \leq \beta_1 x^{\frac{1}{2}} + \beta_2 x^{\frac{1}{3}}$ with $$\beta_1 = \sqrt{3}\alpha_1 - 0.999 \quad \text{and} \quad \beta_2 = 3^{\frac{1}{3}}\alpha_2 - \frac{2}{3}.$$

Here and hereafter, let $x_0=e^{10}$. For $x\geq x_0$ we can bound the smaller order terms in (\ref{together}),
\begin{align*}
\Delta  + \frac{21}{20 \Delta} + 2\sum_{\substack{x+h < p^k \leq x+h+\Delta \\ k\geq 1}} \log p < K_1 \sqrt{x}\log x
\end{align*}
where, for $h\leq x^t$ with $t<1$,
\begin{align*}
K_1 &= C + \frac{4C \log (x_0+2x_0^t)}{\log  (C \sqrt{x_0}\log x_0)} + \frac{2\beta_1}{\log x_0} + \frac{2\beta_2}{x_0^{\frac{1}{6}}\log x_0} + \frac{21}{20C x_0\log^2 x_0}.
\end{align*}
This, along with Lemmas \ref{highbound} and \ref{lowbound}, allow us to bound
\begin{align}\label{init}
\bigg| \psi(x+h) - \psi(x) - h  \bigg| < \frac{1}{\Delta} \Bigg| \sum_{\alpha x/h < |\gamma| < \beta x / \Delta} S(\rho) \Bigg| + E(x,h,\Delta)
\end{align}
where
\begin{align*}
E(x,h,\Delta) &= K_1 \sqrt{x} + \frac{\alpha x (h + \Delta)}{\pi h \sqrt{x-\Delta} } \log\left(\frac{\alpha x}{h}\right) + \frac{4 (x+h+\Delta)^{3/2}}{\pi \beta x} \log\left(\frac{\beta x}{\Delta}\right).
\end{align*}
For $\sqrt{x}\log x\leq h\leq x^t$ we have
\begin{align*}
E(x,h,\Delta) \leq \ & K_1 \sqrt{x} + \frac{2\alpha x}{\pi \sqrt{x-C\sqrt{x}\log x}} \log\left( \frac{\alpha \sqrt{x}}{\log x}\right) \\
& + \frac{4 (x+x^t+C\sqrt{x}\log x)^{3/2}}{\pi \beta x} \log\left(\frac{\beta \sqrt{x}}{C\log x}\right) \leq K_2 \sqrt{x}\log x,
\end{align*}
where, for $x\geq x_0\geq e^{\beta/C}$ and $0<\alpha\leq 5$, we can take
\begin{align*}
K_2 = \ & \frac{K_1}{\log x_0} + \frac{\alpha }{\pi} + \frac{2 (x_0+x_0^t + C\sqrt{x_0}\log x_0)^{3/2}}{\pi \beta x_0^{3/2}}.
\end{align*}

The first term in (\ref{init}) can be estimated with Lemma \ref{middlebound}, so that
\begin{align*}
\frac{1}{\Delta} \Bigg| \sum_{\alpha x/h <| \gamma | < \beta x/\Delta} S(\rho) \Bigg| &< (x+h+\Delta)^{1/2}  \bigg( \frac{1}{\pi}\log \bigg( \frac{\beta h}{\alpha \Delta} \bigg) \log\bigg( \frac{\alpha \beta x^2}{4\pi^2 h \Delta} \bigg) + 5.4\bigg) \\
&< \frac{\sqrt{x}}{\pi}\log x\log\left( \frac{h}{\sqrt{x}\log x}\right) + K_3 \sqrt{x} \log x,
\end{align*}
in which, assuming $100 e^{-10} \leq \frac{\alpha\beta}{4\pi^2 C}\leq 100$, we can take 
\begin{align*}
K_3 &= \frac{1}{\pi} \log\left(\frac{\beta}{\alpha C} \right)\log\left( \frac{\alpha \beta x_0}{4\pi^2 C\log^2 x_0} \right)\frac{1}{\log x_0} \\
&\quad + \frac{x_0^{t/2-1/2}}{\pi\log x_0}\log \bigg( \frac{\beta x_0^{t-1/2}}{\alpha C\log x_0} \bigg) \log\bigg( \frac{\alpha \beta x_0}{4\pi^2 C\log^2 x_0} \bigg) \\
&\quad + \frac{\sqrt{C}}{\pi x_0^{1/4} \sqrt{\log x_0}} \log \bigg( \frac{\beta x_0^{t-1/2}}{\alpha C\log x_0} \bigg) \log\bigg( \frac{\alpha \beta x_0}{4\pi^2 C\log^2 x_0} \bigg) \\
&\quad + \frac{5.4}{\log x_0} \left( 1 + x_0^{t-1} + \frac{C\log x_0}{\sqrt{x_0}} \right)^{1/2}.
\end{align*}
Note that the assumption for $\alpha$ and $\beta$ is to ensure certain terms are bounded for all $x\geq x_0$. Combining estimates, we have
\begin{align}\label{final}
\bigg| \psi(x+h) - \psi(x) - h  \bigg| <& \frac{\sqrt{x}}{\pi}\log x\log\left( \frac{h}{\sqrt{x}\log x}\right) + K_4 \sqrt{x}\log x,
\end{align}
where $K_4 = K_3 + K_2$. It remains to optimise over the parameters. Before deciding these values, recall that we have made the assumptions $\beta \leq 10C$,
\begin{align*}
\frac{15 h}{x} \leq \alpha \leq 5,\quad \beta\geq \gamma_1 \frac{C\log x}{\sqrt{x}},\quad C \alpha < \beta\leq \alpha, \quad \text{and}\quad  \frac{100}{e^{10}}\leq \frac{\alpha\beta}{4\pi^2 C}\leq 100.
\end{align*}
The restriction on $\alpha$ will be satisfied for all $\sqrt{x}\log x\leq h\leq x^\frac{3}{4}$ if we take $\alpha\geq 15 x_0^{-\frac{1}{4}}$. Optimising over $C$, $\alpha$, and $\beta$ to minimise $K_4$, we find that choosing $C= 0.25$ and $\alpha = \beta = 1.35$ allows us to take $K_4 = 2$ for all $x\geq x_0$.

\bibliographystyle{plain}

\bibliography{biblio}

\end{document}